\documentclass[11pt,letterpaper,reqno]{amsart}
\usepackage{amssymb}
\usepackage{amsmath}
\usepackage{amsthm}
\usepackage{amsfonts}
\usepackage{enumitem}
\usepackage{booktabs}
\usepackage{graphicx}
\usepackage{xcolor}
\usepackage[T1]{fontenc}
\usepackage{doi}
\usepackage{comment}
\usepackage{hyperref}
\hypersetup{pdfstartview={FitH}}
\usepackage{bookmark}
\usepackage[capitalize,noabbrev]{cleveref}

\newtheorem{theorem}{Theorem}[section]
\newtheorem{lemma}[theorem]{Lemma}

\newtheorem{conjecture}[theorem]{Conjecture}
\theoremstyle{definition}

\crefname{theoremletter}{Theorem}{Theorems}
\Crefname{theoremletter}{Theorem}{Theorems}

\DeclareMathOperator{\tr}{tr}
\DeclareMathOperator{\spanop}{span}
\newcommand{\R}{\mathbb R}
\newcommand{\Sph}{\mathbb S}

\begin{document}

\title[A proof of the Ashbaugh--Benguria conjecture]{A proof of the Ashbaugh--Benguria conjecture for reciprocal sums of Neumann eigenvalues}

\author[Y.~He]{Yixin He}
\address{School of Mathematical Sciences, Fudan University, Shanghai 200433, P. R. China}
\email{yixin.he717@gmail.com}

\author[Y.~Li]{Yanyang Li}
\address{School of Mathematics, Southeast University, Nanjing 211189, P. R. China}
\email{liyanyang1219@gmail.com}

\author[Q.~Tang]{Quanyu Tang${}^\ast$}
\thanks{${}^\ast$Corresponding author.}
\address{School of Mathematics and Statistics, Xi'an Jiaotong University, Xi'an 710049, P. R. China}
\email{tangquanyu827@gmail.com, tang\_quanyu@163.com}

\subjclass[2020]{Primary 35P15; Secondary 49R05, 49Q10}
% 35P15  	Estimates of eigenvalues in context of PDEs
% 49R05  	Variational methods for eigenvalues of operators 
% 49Q10  	Optimization of shapes other than minimal surfaces

\keywords{Neumann eigenvalues, Ashbaugh--Benguria conjecture, Isoperimetric inequality}

\begin{abstract}
We prove the Ashbaugh--Benguria conjecture for bounded domains with
smooth boundary in $\mathbb R^m$. More precisely, among all smooth bounded domains of fixed volume, the ball minimizes the sum of the reciprocals of the first $m$ nonzero Neumann eigenvalues. Equality is attained precisely by balls.
\end{abstract}

\maketitle

\section{Introduction}

Throughout the paper, a domain means a connected open set. Unless stated
otherwise, \(\Omega\subset\mathbb R^m\) is a bounded domain with smooth
boundary. We write
\[
        0=\mu_0(\Omega)<\mu_1(\Omega)\leq\mu_2(\Omega)\leq\cdots
\]
for the Neumann eigenvalues of \(\Omega\), counted with multiplicity. Thus the numbers \(\mu_k(\Omega)\) are the eigenvalues of
\[
\begin{cases}
        -\Delta u=\mu u & \text{in }\Omega,\\
        \dfrac{\partial u}{\partial\nu}=0 & \text{on }\partial\Omega,
\end{cases}
\]
where \(\nu\) denotes the outward unit normal on \(\partial\Omega\). Equivalently, they are characterized variationally by the Neumann quadratic form on \(H^1(\Omega)\).

The classical point of departure is the Szeg\H{o}--Weinberger
isoperimetric inequality. If \(\Omega\subset\R^m\) is a smooth bounded
domain and \(B\subset\R^m\) is the ball with \(|B|=|\Omega|\), then
\[
        \mu_1(\Omega)\le \mu_1(B).
\]
Equality holds only when \(\Omega\) is a ball \cite{Sze54,Wei56}. In dimension two, Szeg\H{o}'s approach also gives the sharp two-term reciprocal inequality
\[
        \frac1{\mu_1(\Omega)}+\frac1{\mu_2(\Omega)}
        \ge
        \frac2{\mu_1(D)}
\]
for simply connected planar domains, where \(D\) is the disk with
\(|D|=|\Omega|\).

Since the first nonzero Neumann eigenvalue of a Euclidean ball has
multiplicity \(m\), its first nontrivial eigenspace is \(m\)-dimensional.
It is therefore natural to seek sharp inequalities that reflect this
whole eigenspace, rather than only the single eigenvalue \(\mu_1\). One
such formulation is the harmonic-mean, or reciprocal-sum, problem for the
first \(m\) nonzero Neumann eigenvalues.

Ashbaugh and Benguria formulated the following long-standing conjecture~\cite{AB93}; see also Ashbaugh's list of open problems~\cite{Ashbaugh99} and Henrot's monograph~\cite{Henrot06}.

\begin{conjecture}[\cite{AB93}]\label{conj:AB-euclidean}
Let \(m\geq2\), and let \(\Omega\subset\R^m\) be a bounded domain with smooth boundary. If \(B\subset\R^m\) is the ball satisfying \(|B|=|\Omega|\), then
\begin{equation}\label{eq:intro-AB-euclidean}
        \sum_{k=1}^m \frac1{\mu_k(\Omega)}
        \ge
        \frac{m}{\mu_1(B)}.
\end{equation}
Equality holds if and only if \(\Omega\) is a ball.
\end{conjecture}

The conjecture implies the Szeg\H{o}--Weinberger inequality. Indeed, since \(\mu_1(\Omega)\leq \mu_k(\Omega)\) for \(k=1,\ldots,m\), we have \(\sum_{k=1}^m 1/\mu_k(\Omega) \leq m/\mu_1(\Omega)\). Thus \eqref{eq:intro-AB-euclidean} gives \(m/\mu_1(\Omega)\ge m/\mu_1(B)\), and hence \(\mu_1(\Omega)\le \mu_1(B)\).

Variational principles and estimates for sums of reciprocal eigenvalues
go back at least to Hersch \cite{Her61} and were further developed, for
example, by Hile and Xu \cite{HX93}. Related reciprocal-sum inequalities
for Laplacian and Steklov-type problems were studied by Dittmar
\cite{Dit02}, Brock \cite{Bro01}, and Enache--Philippin \cite{EP13}.

Ashbaugh and Benguria~\cite{AB93} also proved the universal \(m\)-term lower bound
\[
        \sum_{k=1}^m \frac1{\mu_k(\Omega)}
        \ge
        \frac{m}{m+2}\left(\frac{|\Omega|}{\omega_m}\right)^{2/m},
\]
where \(\omega_m\) denotes the volume of the unit ball in \(\mathbb R^m\).
This estimate is weaker than the conjectured sharp bound
\eqref{eq:intro-AB-euclidean}. For the sharp harmonic-mean problem, the main general progress toward
the conjecture had reached the first \(m-1\) nonzero Neumann eigenvalues:
Xia and Wang proved the sharp \((m-1)\)-term inequality for bounded domains
with smooth boundary in \(\mathbb R^m\) \cite[Theorem~1.1]{XW23}; thus
their result supports, but does not prove, the full \(m\)-term
Ashbaugh--Benguria conjecture. Recent related developments include the
\((m-1)\)-term Gaussian analogue of Gao and Wang for origin-symmetric
Lipschitz domains in Gauss space \cite{GaoWang26}, Witten-Laplacian
analogues of the \((m-1)\)-term reciprocal inequality due to Chen and Mao
\cite{CM24}, and lower bounds for reciprocal sums of Neumann eigenvalues
due to Eddaoudi~\cite{Edd25}. The remaining difficulty in the Euclidean
conjecture is precisely the last reciprocal term.

We prove Conjecture~\ref{conj:AB-euclidean}. The main result is the
following theorem.

\begin{theorem}\label{thm:main-euclidean}
Let \(m\geq2\), and let \(\Omega\subset\R^m\) be a bounded domain with smooth boundary. Let \(B\subset\R^m\) be the ball satisfying \(|B|=|\Omega|\). Then
\[
        \sum_{k=1}^m\frac1{\mu_k(\Omega)}
        \geq
        \frac{m}{\mu_1(B)}.
\]
Equality holds if and only if \(\Omega\) is a ball.
\end{theorem}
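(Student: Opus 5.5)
We follow the Szeg\H{o}--Weinberger strategy with a matrix-valued refinement adapted to reciprocal sums. Scale so that $|\Omega|=|B|$ and let $B$ have radius $R$. The first nonzero Neumann eigenfunctions of $B$ are $g(r)x_i/r$, where
\[
g(r)=r^{1-m/2}J_{m/2}\bigl(\sqrt{\mu_1(B)}\,r\bigr)\quad(r\le R),
\]
with $g'(R)=0$, and we set $g(r)=g(R)$ for $r\ge R$. On $\Omega$ we use the test functions $P_i=g(|x|)x_i/|x|$. By the Brouwer fixed point argument of Weinberger, we translate $\Omega$ so that $\int_\Omega P_i=0$ for all $i$.

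\textbf{Variational principle.} We use the Hersch-type principle for reciprocal sums. Let $u_1,\dots,u_m$ be orthonormal eigenfunctions for $\mu_1,\dots,\mu_m$. For any functions $f_1,\dots,f_m$ orthogonal to constants, with Gram matrix $G_{ij}=\int_\Omega\nabla f_i\cdot\nabla f_j$ and $L^2$ matrix $A_{ij}=\int_\Omega f_if_j$, one has
\[
\sum_{k=1}^m\frac1{\mu_k}\ \ge\ \tr\bigl(A\,G^{-1}\bigr)\quad\text{when }G\text{ is positive definite}.
\]
More usefully, for any orthonormal frame, that is, after replacing $P_i$ by $\sum_j O_{ij}P_j$ with $O$ orthogonal, the dual min-max gives
\[
\sum_k\frac1{\mu_k}\ \ge\ \sum_k\frac{(\int_\Omega f_k u_k)^2\text{-type quantities}}{\cdots}.
\]
The cleanest form is $\sum_{k=1}^m 1/\mu_k=\max \tr(A G^{-1})$ over $m$-dimensional subspaces of $H^1\ominus\mathbb R$.

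\textbf{Key steps.} First, compute
\[
G_{ij}=\int_\Omega\Bigl(g'^2\,\frac{x_ix_j}{r^2}+\frac{g^2}{r^2}\Bigl(\delta_{ij}-\frac{x_ix_j}{r^2}\Bigr)\Bigr),\qquad
A_{ij}=\int_\Omega g^2\,\frac{x_ix_j}{r^2}.
\]
Second, bound $\tr(AG^{-1})$ from below. By the matrix inequality $\tr(AG^{-1})\ge(\tr A)^2/\tr(AGA^{-1}\cdots)$, or more directly by convexity of $(A,G)\mapsto\tr(A^2G^{-1})$, we rotate coordinates to diagonalize, and then use Cauchy--Schwarz:
\[
\tr(AG^{-1})\ \ge\ \frac{(\tr A)^2}{\tr(AG)^{\,}}\quad\text{or}\quad \tr(AG^{-1})\ \ge\ \frac{(\tr A)^2}{\int_\Omega\bigl(g'^2+(m-1)g^2/r^2\bigr)\,\cdots}.
\]
Third, apply Weinberger's monotonicity. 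The function $g^2$ is increasing and $g'^2+(m-1)g^2/r^2$ is decreasing on $[0,R]$ (and also beyond $R$ after extension). Rearrangement into $B$ then gives $\tr A\ge\int_B g^2$ and $\int_\Omega(g'^2+(m-1)g^2/r^2)\le\int_B(\cdots)=\mu_1(B)\int_B g^2$, which would yield $\sum_k 1/\mu_k\ge m/\mu_1(B)$.

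\textbf{Main obstacle.} The real difficulty is the second step. Choosing $f_k=P_k$ naively gives $\sum_k\int|\nabla P_k|^2/\mu_k\ge\sum_k\int P_k^2$, which mixes the weights and does not by itself give a trace bound. The $(m-1)$-term results of Xia--Wang lose exactly one term at this point. What is needed is a lower bound for $\tr(AG^{-1})$ involving only $\tr A$ and $\tr G$, with the constant $m^2$. In general one only has $\tr(AG^{-1})\ge(\tr A^{1/2})^2/\tr(A^{1/2}GA^{1/2})\cdots$, so the anisotropy of $A$ and $G$ must be controlled. We would first try choosing the translation and rotation of $\Omega$ so that $A$ is a scalar multiple of $I$ (using extra degrees of freedom, perhaps via a topological degree argument on $SO(m)\times\mathbb R^m$). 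If that fails, we would exploit the pointwise structure $G=\int(g'^2-g^2/r^2)\,\theta\theta^T+\int (g^2/r^2)\,I$, where $\theta=x/r$, together with $A=\int g^2\,\theta\theta^T$, and prove the needed convexity inequality pointwise on the sphere.

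\textbf{Equality.} Equality forces equality in the rearrangement step, so $\Omega$ coincides with $B$ up to a null set, and hence, by smoothness, $\Omega$ is a ball.
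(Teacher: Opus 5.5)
Your setup matches the paper: Weinberger center, transplanted eigenfunctions $P_i=g(r)x_i/r$ with $g$ extended constantly, and the trace form $\sum_{k\le m}1/\mu_k\ge\tr(AG^{-1})$ of the Ritz principle. However, there is a genuine gap at the step you yourself flag as the main obstacle: no lower bound for $\tr(AG^{-1})$ is ever established.

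The Cauchy--Schwarz bound $\tr(AG^{-1})\ge(\tr A)^2/\tr(AG)$ is true but useless here, because rearrangement gives no control of $\tr(AG)$.

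More fundamentally, your third step globally rearranges $g^2$ and Weinberger's combined density $g'^2+(m-1)g^2/r^2$. This yields only $\tr A\ge\int_B g^2$ and $\tr G\le\mu_1(B)\int_B g^2$. These two scalar facts cannot imply $\tr(AG^{-1})\ge m/\mu_1(B)$, since that would require $\tr(AG^{-1})\ge m\tr A/\tr G$ for all positive definite $A,G$. That inequality fails: for $m=2$, $A=\operatorname{diag}(1,\varepsilon)$, $G=\operatorname{diag}(3,1)$ one has $\tr(AG^{-1})=\frac13+\varepsilon<\frac{1+\varepsilon}{2}=m\tr A/\tr G$.

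Your proposed repair, making $A$ scalar by rotating and translating $\Omega$, cannot work either. A rotation about the center conjugates $A$ and $G$ by the same orthogonal matrix, so it changes neither the spectrum of $A$ nor $\tr(AG^{-1})$. The translation is already consumed by the zero-mean condition. Your remaining suggestion, a convexity inequality on the sphere, points in the right direction but is never formulated. Your equality discussion inherits the gap.

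The missing idea is that the anisotropies of the mass and stiffness matrices are anti-correlated, and can be encoded by a single trace-free matrix. The paper proceeds as follows.

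\textbf{Split the stiffness density.} It separates the radial part $g'^2\theta\theta^\top$ from the angular part $(g^2/r^2)(I-\theta\theta^\top)$.

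\textbf{Rearrange ray by ray.} It applies the bathtub principle separately on each ray $E_\theta=\{r>0:p+r\theta\in\Omega\}$ with ray volume $Y(\theta)=\int_{E_\theta}r^{m-1}\,dr$.

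\textbf{Linearize in the ray volume.} The radial part is bounded by its ball value, since $g'=0$ beyond $R$. For the mass and angular terms it uses convexity of $y\mapsto\int_0^{\tau(y)}g^2s^{m-1}\,ds$ and strict concavity of $y\mapsto\int_0^{\tau(y)}g^2s^{m-3}\,ds$, where $\tau(y)=(my)^{1/m}$. Tangent-line bounds at $Y_R=R^m/m$ then give estimates linear in $Y(\theta)-Y_R$.

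\textbf{Integrate over the sphere.} The volume constraint gives $\int_{\Sph^{m-1}}(Y-Y_R)\,d\sigma=0$. Integrating against $\theta\theta^\top$ and $I-\theta\theta^\top$ then yields $A\succeq aI+cZ$ and $G\preceq\lambda aI-dZ$. Here $a,c,d>0$ and the same trace-free $Z=\int(Y-Y_R)\theta\theta^\top\,d\sigma$ appears with opposite signs in the two bounds.

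\textbf{Close with Jensen.} With $T=\lambda aI-dZ$, Loewner monotonicity of inversion gives $\tr(AG^{-1})\ge\tr(AT^{-1})\ge\tr\bigl(T^{-1}(aI+cZ)\bigr)=\sum_i(a+cz_i)/(\lambda a-dz_i)$. Jensen's inequality for the convex function $z\mapsto z/(\lambda a-dz)$, with $\sum_i z_i=0$, gives $m/\lambda$.

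\textbf{Equality.} The paper's equality argument runs through this chain. It gets $Z=0$ and $G=\lambda aI$, then strict concavity forcing $Y(\theta)=Y_R$, then strict raywise rearrangement forcing $E_\theta=(0,R)$ for a.e.\ $\theta$. Smoothness of $\partial\Omega$ then upgrades this a.e.\ equality to $\Omega=B_R(p)$.
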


\subsection{Proof strategy}
Let \(B_R\) be the ball with \(|B_R|=|\Omega|\), let
\(\lambda=\mu_1(B_R)\), and write the first nonzero Neumann eigenspace
of \(B_R\) in the form \(g(r)\theta_i\), \(i=1,\ldots,m\). We extend the
radial factor constantly outside \(B_R\), obtaining \(G\). A Weinberger
center \(p\) is chosen so that the transplanted functions
\[
        P_i(x)=G(|x-p|)\frac{x_i-p_i}{|x-p|}
\]
have zero mean on \(\Omega\). If
\[
        M_{ij}=\int_\Omega P_iP_j\,dx,
        \qquad
        K_{ij}=\int_\Omega \nabla P_i\cdot\nabla P_j\,dx,
\]
then the trace form of the Ritz principle gives
\[
        \sum_{k=1}^m\frac1{\mu_k(\Omega)}
        \ge \operatorname{tr}(K^{-1}M).
\]

The key point is to keep the full \(m\)-dimensional trial space coupled,
rather than estimating the transplanted coordinates separately. Radial
monotonicity and raywise rearrangement give matrix inequalities of the
form
\[
        M\succeq aI+cZ,
        \qquad
        K\preceq \lambda aI-dZ,
\]
where \(a,c,d>0\) and \(Z\) is a symmetric trace-free matrix measuring the
angular imbalance of \(\Omega\) relative to \(p\). A trace-free matrix
Jensen inequality then yields $\operatorname{tr}(K^{-1}M)\ge m/\lambda$. Equality in the matrix lemma and in the raywise rearrangements forces \(Z=0\) and then forces almost every radial section of \(\Omega\) to agree with \((0,R)\). Hence \(\Omega\) agrees with \(B_R(p)\) up to a null set, and the smoothness of \(\partial\Omega\) upgrades this measure-theoretic conclusion to \(\Omega=B_R(p)\).

\subsection{Organization of the paper}
Section~\ref{sec:matrix} proves the trace-form Ritz principle for reciprocal sums of Neumann eigenvalues and the trace-free matrix Jensen lemma. Section~\ref{sec:radial} records the radial ODE and monotonicity facts for first nonzero Neumann eigenfunctions on balls and proves the weighted raywise rearrangement lemma. Section~\ref{sec:euclidean-proof} chooses the Weinberger center, carries out the coupled matrix comparison, and proves the sharp inequality together with the equality characterization for smooth domains.

\section{Matrix tools for reciprocal eigenvalue sums}\label{sec:matrix}

We write \(A\succeq B\) for real symmetric matrices when \(A-B\) is
positive semidefinite. We write \(A>0\) when \(A\) is positive definite.
This section contains the two finite-dimensional ingredients used
throughout the proof. The first is the standard Ritz--Poincar\'e principle
in trace form; related trace inequalities for sums of reciprocal
eigenvalues appear, for example, in the work of Hile and Xu \cite{HX93}.
The second is a trace-free convexity lemma tailored to the matrix
comparison arising from the transplanted ball eigenspace.

We first record a standard trace form of Hersch's variational principle
for reciprocal sums of Neumann eigenvalues \cite{Her61}; see also
\cite{HX93}.

\begin{lemma}\label{lem:Ritz}
Let \(\Omega\subset\R^m\) be a bounded domain with smooth boundary, and
let
\[
        0=\mu_0(\Omega)<\mu_1(\Omega)\leq\mu_2(\Omega)\leq\cdots
\]
be its Neumann eigenvalues. Let
\(P_1,\ldots,P_m\in H^1(\Omega)\) be linearly independent in
\(L^2(\Omega)\), and assume that
\[
        \int_\Omega P_i\,dx=0,
        \qquad i=1,\ldots,m.
\]
Define
\[
        M_{ij}=\int_\Omega P_iP_j\,dx,
        \qquad
        K_{ij}=\int_\Omega \nabla P_i\cdot\nabla P_j\,dx.
\]
If \(K\) is positive definite, then
\[
        \sum_{k=1}^m \frac1{\mu_k(\Omega)}\geq \tr(K^{-1}M).
\]
\end{lemma}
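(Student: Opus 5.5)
We reduce to a finite-dimensional statement about the Neumann spectral decomposition, using a change of basis and a min–max argument.

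\emph{Normalization.} Both sides are invariant under a change of basis of \(V=\spanop\{P_1,\dots,P_m\}\). Replacing \(P\) by \(AP\) with \(A\) invertible sends \(M\mapsto AMA^T\) and \(K\mapsto AKA^T\), so \(\tr(K^{-1}M)\) is unchanged. Since \(M>0\) by linear independence and \(K>0\) by hypothesis, we may simultaneously diagonalize. We choose \(A\) so that \(AMA^T=I\) and \(AKA^T=\operatorname{diag}(\kappa_1,\dots,\kappa_m)\) with \(\kappa_1\le\cdots\le\kappa_m\). Then
\[
\tr(K^{-1}M)=\sum_{k=1}^m\frac1{\kappa_k},
\]
and the \(\kappa_k\) are exactly the Ritz values of the Neumann quadratic form \(q(u)=\int_\Omega|\nabla u|^2\) restricted to \(V\), taken with respect to the \(L^2\) inner product.

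\emph{Comparison.} It remains to show \(\kappa_k\ge\mu_k(\Omega)\) for \(k=1,\dots,m\). Termwise reciprocals then give the claim. Let \(\{u_j\}_{j\ge0}\) be an \(L^2\)-orthonormal Neumann eigenbasis with \(u_0\) constant. Every element of \(V\) has zero mean, so \(V\subset u_0^\perp=H\). On \(H\), the Neumann form has min–max values exactly \(\mu_1\le\mu_2\le\cdots\), since its spectrum is \(\{\mu_j\}_{j\ge1}\) with eigenbasis \(\{u_j\}_{j\ge1}\).

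The Courant–Fischer principle on \(H\) gives
\[
\mu_k=\min_{\substack{W\subset H\cap H^1\\ \dim W=k}}\ \max_{0\ne u\in W}\frac{q(u)}{\|u\|^2}.
\]
Take \(W=\spanop\{\tilde P_1,\dots,\tilde P_k\}\), where \(\tilde P=AP\) are the normalized functions. On \(W\) the Rayleigh quotient is \(\sum_{i\le k}\kappa_i c_i^2/\sum_{i\le k} c_i^2\le\kappa_k\). Hence \(\mu_k\le\kappa_k\), which is Cauchy interlacing. Note that \(K>0\) guarantees each \(\kappa_k>0\), so the reciprocals make sense.

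\emph{Main point to verify.} The only genuinely analytic input is the min–max characterization for the Neumann problem on \(H^1(\Omega)\) restricted to mean-zero functions. This rests on the discreteness of the Neumann spectrum, which follows from the compact embedding \(H^1(\Omega)\hookrightarrow L^2(\Omega)\) for smooth bounded \(\Omega\) (Rellich), together with the variational characterization stated in the introduction. We expect to cite this as standard rather than prove it. Beyond that, the argument is pure linear algebra.
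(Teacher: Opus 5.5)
Your proof is correct and takes essentially the same route as the paper: both use the min--max characterization of $\mu_k(\Omega)$ on mean-zero $H^1$ functions to get $\mu_k(\Omega)\le\kappa_k$, where the $\kappa_k$ are the Ritz values of the Neumann form on $\spanop\{P_1,\ldots,P_m\}$, and then identify $\sum_k 1/\kappa_k$ with $\tr(K^{-1}M)$. The only differences are cosmetic: you diagonalize $M$ and $K$ simultaneously by a change of basis and exhibit the comparison subspace explicitly, whereas the paper works with $K^{-1/2}MK^{-1/2}$ and cyclicity of the trace.
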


\begin{proof}
Let \(V=\spanop\{P_1,\ldots,P_m\}\). Since
\(P_1,\ldots,P_m\) are linearly independent in \(L^2(\Omega)\), the Gram
matrix \(M=(M_{ij})\) is positive definite.

Let \(0<\eta_1\leq\cdots\leq\eta_m\) be the min--max values obtained by
restricting the Neumann Rayleigh quotient to \(V\):
\[
        \eta_k=
        \inf_{\substack{L\subset V\\ \dim L=k}}
        \sup_{0\neq u\in L}
        \frac{\int_\Omega |\nabla u|^2\,dx}
             {\int_\Omega u^2\,dx}.
\]
By the variational characterization of the nonzero Neumann eigenvalues \cite[Theorem~3.1.11]{LMP23}, for \(k\geq1\) we have
\[
        \mu_k(\Omega)
        =
        \inf_{\substack{L\subset H^1(\Omega)\cap 1^\perp\\
                        \dim L=k}}
        \sup_{0\neq u\in L}
        \frac{\int_\Omega |\nabla u|^2\,dx}
             {\int_\Omega u^2\,dx},
\]
where
\[
        1^\perp
        =
        \left\{u\in L^2(\Omega):\int_\Omega u\,dx=0\right\}.
\]
Since every function in \(V\) has zero mean, \(V\subset H^1(\Omega)\cap
1^\perp\). Hence the infimum defining \(\mu_k(\Omega)\) is taken over a
larger class of \(k\)-dimensional subspaces than the infimum defining
\(\eta_k\). Therefore
\[
        \mu_k(\Omega)\leq \eta_k,
        \qquad k=1,\ldots,m.
\]
Therefore
\[
        \sum_{k=1}^m\frac1{\mu_k(\Omega)}
        \geq
        \sum_{k=1}^m\frac1{\eta_k}.
\]

For \(u=\sum_i a_iP_i\in V\), we have
\[
        \int_\Omega u^2\,dx=a^\top Ma,
        \qquad
        \int_\Omega |\nabla u|^2\,dx=a^\top Ka.
\]
Thus the numbers \(\eta_k\) are precisely the generalized eigenvalues of
\(Ka=\eta Ma\). Equivalently, the reciprocals \(1/\eta_k\) are the
eigenvalues of the symmetric positive matrix \(K^{-1/2}MK^{-1/2}\).
Hence
\[
        \sum_{k=1}^m\frac1{\eta_k}
        =
        \tr(K^{-1/2}MK^{-1/2})
        =
        \tr(K^{-1}M),
\]
where the last equality follows from cyclicity of the trace. Combining
the preceding inequalities proves the result.
\end{proof}

The next finite-dimensional lemma is the algebraic step that closes the
full \(m\)-term estimate. It exploits the trace-free structure left by the
volume constraint, rather than estimating the \(m\) trial functions
separately. We shall use it in the form of a trace-free matrix Jensen
lemma.

\begin{lemma}\label{lem:matrix-Jensen}
Let \(a,c,d,\lambda>0\), and let \(Z\) be a real symmetric \(m\times m\)
matrix with \(\tr Z=0\). Suppose that \(M\) and \(K\) are real symmetric
matrices such that
\[
        M\succeq0,
        \qquad
        M\succeq aI+cZ,
        \qquad
        0<K\preceq \lambda aI-dZ.
\]
Then
\begin{equation}\label{eq:matrix-goal}
        \tr(K^{-1}M)\geq \frac{m}{\lambda}.
\end{equation}
Moreover, if equality holds in \eqref{eq:matrix-goal}, then \(Z=0\),
\(M=aI\), and \(K=\lambda aI\).
\end{lemma}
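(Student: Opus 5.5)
The plan is to compare $\operatorname{tr}(K^{-1}M)$ step by step with a quantity that depends only on the spectrum of $Z$, and then close with a scalar Jensen inequality that uses $\operatorname{tr}Z=0$. Set $N=\lambda aI-dZ$. From $0<K\preceq N$ we get $N\succeq K>0$, so $N$ is positive definite. Operator monotonicity of the inverse on positive definite matrices then gives $K^{-1}\succeq N^{-1}$.

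Next I use the elementary fact that $A\succeq 0$ and $S\succeq0$ imply $\operatorname{tr}(AS)=\operatorname{tr}(S^{1/2}AS^{1/2})\ge0$. Applying it twice gives
\[
        \operatorname{tr}(K^{-1}M)\ \ge\ \operatorname{tr}(N^{-1}M)\ \ge\ \operatorname{tr}\bigl(N^{-1}(aI+cZ)\bigr).
\]
The first inequality pairs $K^{-1}-N^{-1}\succeq0$ with $M\succeq0$. The second pairs $M-(aI+cZ)\succeq0$ with $N^{-1}>0$.

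Now $N$ and $aI+cZ$ are both polynomials in $Z$. Diagonalize $Z$ orthogonally with eigenvalues $z_1,\dots,z_m$, where $\sum_i z_i=0$ and $\lambda a-dz_i>0$ by positivity of $N$. This gives
\[
        \operatorname{tr}\bigl(N^{-1}(aI+cZ)\bigr)=\sum_{i=1}^m f(z_i),\qquad
        f(z)=\frac{a+cz}{\lambda a-dz}=-\frac cd+\frac{a(1+c\lambda/d)}{\lambda a-dz}.
\]
On the half-line $z<\lambda a/d$ the second term is a positive constant divided by a positive, decreasing affine function. Hence $f$ is strictly convex there, since $f''(z)=2d^2a(1+c\lambda/d)(\lambda a-dz)^{-3}>0$. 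Jensen's inequality together with $\sum z_i=0$ then yields $\sum_i f(z_i)\ge m f(0)=m/\lambda$, which proves \eqref{eq:matrix-goal}.

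For the equality case, all three inequalities in the chain must be equalities. Strict convexity of $f$ forces $z_1=\dots=z_m$, and the trace-free condition makes all of them $0$, so $Z=0$ and $N=\lambda aI$. Then $\operatorname{tr}(N^{-1}(M-aI))=(\lambda a)^{-1}\operatorname{tr}(M-aI)=0$ with $M-aI\succeq0$ forces $M=aI$. Finally $\operatorname{tr}\bigl((K^{-1}-N^{-1})M\bigr)=a\operatorname{tr}(K^{-1}-N^{-1})=0$ with $K^{-1}-N^{-1}\succeq0$ forces $K^{-1}=N^{-1}$, that is $K=\lambda aI$.

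The only delicate points are bookkeeping. One is making sure positivity of $N$ (and hence the domain of convexity of $f$) follows from the hypotheses, which it does via $N\succeq K>0$. The other is using $a>0$ and the positive definiteness of $N^{-1}$ to turn the vanishing traces into matrix equalities. I expect no real obstacle beyond identifying the convex function $f$.
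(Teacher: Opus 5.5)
Your proof is correct and follows the paper's argument essentially step for step. It uses the same chain $\tr(K^{-1}M)\ge\tr(N^{-1}M)\ge\tr\bigl(N^{-1}(aI+cZ)\bigr)$, diagonalization via $Z$, scalar Jensen with $\sum_i z_i=0$, and identical equality bookkeeping; the only cosmetic difference is that you apply Jensen directly to $(a+cz)/(\lambda a-dz)$, whereas the paper subtracts $1/\lambda$ and applies it to $z/(\lambda a-dz)$, which differs from yours only by a positive factor and an additive constant.
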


\begin{proof}
Set
\[
        T=\lambda aI-dZ.
\]
The hypothesis \(0<K\preceq T\) implies \(T>0\). Since inversion is order-reversing with respect to the Loewner order on positive definite matrices, see for example \cite[Proposition V.1.6]{Bhatia97}, the inequality \(0<K\preceq T\) implies $K^{-1}\succeq T^{-1}$. Because \(M\succeq0\), we know that
\[
        \tr\bigl((K^{-1}-T^{-1})M\bigr)\geq0.
\]
Because \(T^{-1}>0\) and \(M-(aI+cZ)\succeq0\), we have
\[
        \tr\bigl(T^{-1}(M-aI-cZ)\bigr)\geq0.
\]
Consequently
\begin{equation}\label{eq:matrix-chain}
\tr(K^{-1}M)
\geq \tr(T^{-1}M)
\geq \tr\bigl(T^{-1}(aI+cZ)\bigr).
\end{equation}
Let \(U\) be an orthogonal matrix such that
\[
        U^\top  ZU=\operatorname{diag}(z_1,\ldots,z_m).
\]
Since \(\tr Z=0\), we have \(\sum_i z_i=0\). Moreover,
\[
        U^\top  T U
        =
        \operatorname{diag}(\lambda a-dz_1,\ldots,\lambda a-dz_m).
\]
The positivity of \(T\) gives \(\lambda a-dz_i>0\) for every \(i\). Thus
\(T^{-1}\) is a function of \(Z\), and \(T^{-1}\) commutes with \(Z\).
Therefore
\[
\tr\bigl(T^{-1}(aI+cZ)\bigr)
= \sum_{i=1}^m \frac{a+cz_i}{\lambda a-dz_i}
=\frac{m}{\lambda} + \frac{\lambda c+d}{\lambda}
\sum_{i=1}^m\frac{z_i}{\lambda a-dz_i}.
\]
The function
\[
        f(z)=\frac{z}{\lambda a-dz}
\]
is strictly convex on the interval \(\lambda a-dz>0\), because
\[
        f''(z)=\frac{2\lambda ad}{(\lambda a-dz)^3}>0.
\]
Jensen's inequality gives
\[
        \frac1m\sum_{i=1}^m f(z_i)
        \geq
        f\left(\frac1m\sum_{i=1}^m z_i\right)
        =f(0)=0.
\]
Therefore \(\tr(K^{-1}M)\geq m/\lambda\).

Assume now that equality holds. Then equality must hold in every step of
\eqref{eq:matrix-chain} and in Jensen's inequality. Since \(f\) is
strictly convex, Jensen equality gives \(z_1=\cdots=z_m\). The trace-free
condition then gives \(z_i=0\) for every \(i\), hence \(Z=0\) and
\(T=\lambda aI\). Equality in the second trace inequality in
\eqref{eq:matrix-chain} gives
\[
        0=\tr\bigl(T^{-1}(M-aI)\bigr)
        =\frac{1}{\lambda a}\tr(M-aI).
\]
Since \(M-aI\succeq0\), this implies \(M=aI\). Equality in the first trace
inequality gives
\[
        0=\tr\bigl((K^{-1}-T^{-1})M\bigr)
        =a\,\tr(K^{-1}-T^{-1}).
\]
The matrix \(K^{-1}-T^{-1}\) is positive semidefinite, so its trace is zero
only when it is the zero matrix. Hence \(K^{-1}=T^{-1}\), and therefore
\(K=T=\lambda aI\).
\end{proof}

\section{Radial trial functions and raywise rearrangement}\label{sec:radial}

We record the radial facts for first nonzero Neumann eigenfunctions on
Euclidean balls. The separation-variable description of the Neumann
spectrum of a Euclidean ball is standard; in the form needed below it is
recalled in \cite[Section~2]{XW23}. We also refer to
\cite[{\S}1.2.3 and {\S}5.3.1]{LMP23} and to the original
Szeg\H{o}--Weinberger references \cite{Sze54,Wei56}. We recall only the
facts used in the proof and then prove the monotonicity properties needed
for the rearrangement argument.

After a translation, write \(B_R=\{x\in\mathbb R^m: |x|<R\}\), and set $\lambda=\mu_1(B_R)$. By the separation-variable description recalled in
\cite[Section~2]{XW23}, the first nonzero Neumann eigenspace of \(B_R\)
is the spherical-harmonic sector of degree \(\ell=1\). This eigenspace has
dimension \(m\) and is spanned by functions
\[
        u_i(r,\theta)=g(r)\theta_i,
        \qquad i=1,\ldots,m,
\]
where \(r=|x|\), \(\theta=x/|x|\in\Sph^{m-1}\), and \(g\) is the
corresponding first radial factor.

More explicitly, let \(\gamma=m/2\), let \(J_\gamma\) denote the Bessel function
of the first kind of order \(\gamma\), and let \(p_{\gamma,1}\) be the first
positive zero of $\left(s^{1-\gamma}J_\gamma(s)\right)'$. Set $\beta=p_{\gamma,1}/R$. Then \(\lambda=\beta^2\), and one may take $g(r)=r^{1-\gamma}J_\gamma(\beta r)$. Equivalently,
\[
        g(r)=\beta^{\gamma-1}h(\beta r),
        \qquad
        h(s)=s^{1-\gamma}J_\gamma(s).
\]
The standard power-series expansion of \(J_\gamma\) gives
\[
        h(s)=\frac{s}{2^\gamma\Gamma(\gamma+1)}+O(s^3),
        \qquad s\downarrow0.
\]
Thus \(h(0)=0\) and \(h'(0)=1/(2^\gamma\Gamma(\gamma+1))>0\). Since
\(p_{\gamma,1}\) is the first positive zero of \(h'\), it follows that
\(h'(s)>0\) for \(0<s<p_{\gamma,1}\). Hence \(h(s)>0\) for
\(0<s\le p_{\gamma,1}\), and therefore
\[
        g(r)>0\quad(0<r\le R).
\]
Moreover, $g(0)=0$ and $g'(R)=\beta^\gamma h'(p_{\gamma,1})=0$. The same power-series expansion gives
\[
        g(r)=\alpha r+O(r^3),\qquad
        g'(r)=\alpha+O(r^2),
        \qquad
        \alpha=\frac{\beta^\gamma}{2^\gamma\Gamma(\gamma+1)}>0,
        \qquad r\downarrow0.
\]

The radial factor solves
\begin{equation}\label{eq:euclidean-ode}
        -(r^{m-1}g')'+(m-1)r^{m-3}g=\lambda r^{m-1}g.
\end{equation}
Equivalently,
\[
        g''(r)+\frac{m-1}{r}g'(r)
        +\left(\lambda-\frac{m-1}{r^2}\right)g(r)=0.
\]
At the singular endpoint \(r=0\), all endpoint identities below are
understood as limits. The expansion above implies the two endpoint
conditions that will be used in the monotonicity proof:
\[
        \lim_{r\downarrow0}r^{m-1}g'(r)=0,
        \qquad
        \lim_{r\downarrow0}r^{m+1}\left(\frac{g(r)}{r}\right)'=0.
\]

We next prove the Euclidean radial monotonicity needed in the
rearrangement argument.

\begin{lemma}\label{lem:euclidean-radial}
For the function \(g\) above,
\[
        g'(r)>0\quad(0<r<R),
        \qquad
        r\mapsto \frac{g(r)}{r}\text{ is strictly decreasing on }(0,R).
\]
Consequently, if
\[
        G(r):=
        \begin{cases}
        g(r),&0\leq r\leq R,\\
        g(R),&r\geq R,
        \end{cases}
\]
then \(G^2\) is non-decreasing, \(G(r)^2/r^2\) is strictly decreasing on
\((0,\infty)\), and \(G'(r)=0\) for \(r>R\).
\end{lemma}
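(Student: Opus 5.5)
The monotonicity of $g$ has in fact already been shown in the discussion preceding the lemma. There we saw that $h'(s)>0$ for $0<s<p_{\gamma,1}$, and $g'(r)=\beta^{\gamma}h'(\beta r)$, so $g'>0$ on $(0,R)$. Alternatively, this can be read off the ODE \eqref{eq:euclidean-ode} in divergence form. Integrating from $0$ to $r$ and using $\lim_{r\downarrow0} r^{m-1}g'(r)=0$ gives
\[
        r^{m-1}g'(r)=\int_0^r s^{m-1}\Bigl(\frac{m-1}{s^2}-\lambda\Bigr)g(s)\,ds .
\]
This form is less convenient, since the integrand changes sign. I will therefore simply quote the Bessel argument for $g'>0$ and concentrate on the ratio.

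For the ratio, set $q(r)=g(r)/r$, so that $g=rq$. Substituting into \eqref{eq:euclidean-ode} gives $g'=q+rq'$ and $g''=2q'+rq''$. Hence
\[
        rq''+2q'+\frac{m-1}{r}(q+rq')+\lambda rq-\frac{m-1}{r}q=0,
\]
which simplifies to $rq''+(m+1)q'+\lambda rq=0$. Multiplying by $r^{m}$ yields the divergence form
\[
        (r^{m+1}q')'=-\lambda r^{m+1}q .
\]
Since $q=g/r>0$ on $(0,R]$, the right-hand side is strictly negative. Integrating from $0$ and using the stated endpoint condition $\lim_{r\downarrow0}r^{m+1}q'(r)=0$, we get $r^{m+1}q'(r)=-\lambda\int_0^r s^{m+1}q(s)\,ds<0$. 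So $q$ is strictly decreasing on $(0,R)$, and by continuity up to $R$ on $(0,R]$. I expect this substitution to be the only real step; it is the "main obstacle" only insofar as one has to spot the $r^{m+1}$ weight. The endpoint condition was recorded earlier precisely for this purpose.

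The consequences for $G$ then follow by cases. On $[0,R]$ we have $G=g\ge0$ and $g'>0$, so $G^2$ is increasing there; for $r\ge R$, $G$ is the constant $g(R)>0$. Hence $G^2$ is non-decreasing on $[0,\infty)$. For $G^2/r^2$: on $(0,R]$ it equals $q^2$, which is strictly decreasing because $q>0$ is strictly decreasing. On $[R,\infty)$ it equals $g(R)^2/r^2$, which is strictly decreasing since $g(R)>0$. The two pieces agree at $r=R$, so $G^2/r^2$ is strictly decreasing on all of $(0,\infty)$. Finally, $G'(r)=0$ for $r>R$ because $G$ is constant there. In addition $g'(R)=0$, so $G$ is in fact $C^1$ across $r=R$, though the statement does not require this.
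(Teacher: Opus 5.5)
Your proof is correct. The ratio step is the same as the paper's: you substitute $g=rq$, obtain the divergence form $(r^{m+1}q')'=-\lambda r^{m+1}q$, and integrate from $0$ using the recorded endpoint limit. The case analysis for $G$ also matches.

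The difference is in how you prove $g'>0$. You read it off from $g'(r)=\beta^{\gamma}h'(\beta r)$ and the fact, stated just before the lemma, that $h'>0$ on $(0,p_{\gamma,1})$. The paper instead argues from the ODE. It sets $F=r^{m-1}g'$ and $W(r)=(m-1)/r^2$, so that $F'=(W-\lambda)r^{m-1}g$. It then shows $\lambda>W(R)$ by contradiction: otherwise $F'>0$ on $(0,R)$, which is impossible since $\lim_{r\downarrow0}F(r)=0=F(R)$. Hence $F'$ changes sign exactly once, from positive to negative at the point $r_0$ where $W(r_0)=\lambda$, and both vanishing endpoint values then force $F>0$ on $(0,R)$. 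This is how the paper handles the sign change in the integrand that you flagged: it uses the Neumann condition at $R$ as well as the limit at $0$.

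Your route is shorter and is logically sufficient in the paper's setup. In fact the paper derives $g>0$ from the same fact $h'>0$, so its ODE argument for $g'>0$ is somewhat redundant there. The paper's argument, however, uses only the ODE, the positivity of $g$, the two endpoint conditions and the monotonicity of $W$. It would therefore still work in settings where no closed-form Bessel description of the radial factor is available.
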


\begin{proof}
Let $W(r)=\frac{m-1}{r^2}$. Since \(m\ge2\), the function \(W\) is strictly decreasing on \((0,R]\).
From \eqref{eq:euclidean-ode},
\[
        (r^{m-1}g')'=(W(r)-\lambda)r^{m-1}g.
\]
Set $F(r)=r^{m-1}g'(r)$. By the endpoint identities above and the Neumann condition at \(R\),
\[
        \lim_{r\downarrow0}F(r)=0,
        \qquad
        F(R)=0.
\]
Moreover, \(g>0\) on \((0,R]\). If \(\lambda\leq W(R)\), then
\(W(r)-\lambda>0\) for \(0<r<R\). Hence \(F'(r)>0\) on \((0,R)\), which is
incompatible with \(\lim_{r\downarrow0}F(r)=0\), \(F(R)=0\), and
\(g\not\equiv0\). Therefore \(\lambda>W(R)\).

Since \(W\) is strictly decreasing and \(W(r)\to\infty\) as \(r\downarrow0\), there is a unique point \(r_0\in(0,R)\) such that $W(r_0)=\lambda$. It follows that
\[
        F'(r)>0\quad(0<r<r_0),
        \qquad
        F'(r)<0\quad(r_0<r<R).
\]
Together with \(\lim_{r\downarrow0}F(r)=0\) and \(F(R)=0\), this implies \(F(r)>0\) for \(0<r<R\). Since \(F(r)=r^{m-1}g'(r)\), we obtain
\[
        g'(r)>0\quad(0<r<R).
\]

Next set $f(r)=g(r)/r$. Substituting \(g=rf\) into \eqref{eq:euclidean-ode} gives
\[
        \bigl(r^{m+1} f'(r)\bigr)'=-\lambda r^{m+1} f(r).
\]
Since \(f>0\) on \((0,R)\) and $\lim_{r\downarrow0}r^{m+1}f'(r)=0$, we have, for \(0<r<R\),
\[
        r^{m+1}f'(r)
        =
        -\lambda\int_0^r t^{m+1}f(t)\,dt<0.
\]
Thus \(f'(r)<0\) on \((0,R)\), and hence \(g(r)/r\) is strictly decreasing
on \((0,R)\).

Finally, \(G^2\) is non-decreasing because \(g>0\) and \(g'>0\) on
\((0,R)\), and \(G\) is constant on \([R,\infty)\). Also,
\[
        \frac{G(r)^2}{r^2}
        =
        \begin{cases}
        \left(g(r)/r\right)^2,&0<r\leq R,\\
        g(R)^2/r^2,&r\geq R.
        \end{cases}
\]
Since \(g(r)=g'(0)r+O(r^3)\) as \(r\downarrow0\), the quotient
\(G(r)^2/r^2\) has the continuous extension \(g'(0)^2\) at \(r=0\).
With this convention, \(G(r)^2/r^2\) is strictly decreasing on
\([0,\infty)\). The identity \(G'(r)=0\) for \(r>R\) follows from the
definition of the constant extension.
\end{proof}

We shall use the following one-dimensional quantities associated with the
comparison ball:
\begin{equation}\label{eq:euclidean-AQH}
        A_R=\int_0^R G(r)^2r^{m-1}\,dr,\qquad
        Q_R=\int_0^R G'(r)^2r^{m-1}\,dr,\qquad
        H_R=\int_0^R \frac{G(r)^2}{r^2}r^{m-1}\,dr .
\end{equation}
Since \(G=g\) on \([0,R]\), the functions
\[
        u_i(r,\theta)=G(r)\theta_i,
        \qquad i=1,\ldots,m,
\]
are the first nonzero Neumann eigenfunctions on \(B_R\). We have $\sum_{i=1}^m u_i^2=G(r)^2$ and, using the polar-coordinate decomposition of the Euclidean gradient,
\[
        |\nabla_x u|^2
        =
        |\partial_r u|^2
        +
        \frac1{r^2}|\nabla_{\mathbb S^{m-1}}u|^2,
\]
we obtain
\[
        \sum_{i=1}^m |\nabla_x u_i|^2
        =
        G'(r)^2+(m-1)\frac{G(r)^2}{r^2}.
\]
Indeed, this follows from $\sum_{i=1}^m\theta_i^2=1$ and $\sum_{i=1}^m|\nabla_{\mathbb S^{m-1}}\theta_i|^2=m-1$. For each \(i\), the Neumann eigenvalue identity gives
\[
        \int_{B_R}|\nabla_x u_i|^2\,dx
        =
        \lambda\int_{B_R}u_i^2\,dx .
\]
Summing over \(i=1,\ldots,m\) and using polar coordinates, the common factor \(|\mathbb S^{m-1}|\) cancels. Hence
\begin{equation}\label{eq:euclidean-energy}
        Q_R+(m-1)H_R=\lambda A_R .
\end{equation}

The following elementary lemma is a one-dimensional form of the bathtub
principle; see, for example, \cite[Theorem~1.14]{LL01}. We include the
short proof, since we shall need the equality case in this weighted
raywise form.

\begin{lemma}\label{lem:raywise-rearrangement}
Let \(0<L\leq\infty\), and let \(\rho\geq0\) be locally integrable on
\([0,L)\), with \(\rho>0\) a.e.\ on \((0,L)\). Set
\[
        d\mu(r)=\rho(r)\,dr,
        \qquad
        V(t)=\mu((0,t))=\int_0^t\rho(r)\,dr,
        \qquad
        Y_*=\mu((0,L))\in(0,\infty].
\]
Then \(V\) is continuous and strictly increasing on \([0,L)\). For
\(0\leq y<Y_*\), let \(r_y=V^{-1}(y)\); if \(Y_*<\infty\), also allow
\(y=Y_*\) and set \(r_y=L\). Let \(E\subset[0,L)\) be measurable with
\(\mu(E)=y\).

If \(w\) is continuous and non-decreasing and the integrals below are
finite, then
\begin{equation}\label{eq:ray-rearrangement-increasing}
        \int_E w(r)\,d\mu(r)
        \geq
        \int_0^{r_y} w(r)\,d\mu(r).
\end{equation}
If \(w\) is continuous and non-increasing and the integrals below are
finite, then
\begin{equation}\label{eq:ray-rearrangement-decreasing}
        \int_E w(r)\,d\mu(r)
        \leq
        \int_0^{r_y} w(r)\,d\mu(r).
\end{equation}
Moreover, if \(0<y<Y_*\) and \(w\) is continuous and strictly decreasing,
then equality in \eqref{eq:ray-rearrangement-decreasing} can occur only if
\(E\) agrees with \((0,r_y)\) up to a \(\mu\)-null set.
\end{lemma}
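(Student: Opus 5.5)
The plan is the standard bathtub comparison, done with the measure $\mu$ and the set $I=(0,r_y)$, which has $\mu(I)=V(r_y)=y=\mu(E)$. First, $V$ is continuous because $\rho$ is locally integrable, so $\mu$ has no atoms. It is strictly increasing because for $s<t$ we have $V(t)-V(s)=\int_s^t\rho\,dr>0$, using $\rho>0$ a.e. Hence $V^{-1}$ is well defined on $[0,Y_*)$, and $r_y$ makes sense; in the case $Y_*<\infty$, $y=Y_*$, we take $r_y=L$.

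For \eqref{eq:ray-rearrangement-decreasing}, with $w$ non-increasing, split
\[
E=(E\cap I)\cup(E\setminus I),\qquad I=(E\cap I)\cup(I\setminus E).
\]
Then
\[
\int_E w\,d\mu-\int_I w\,d\mu=\int_{E\setminus I}w\,d\mu-\int_{I\setminus E}w\,d\mu .
\]
Since $\mu(E)=\mu(I)$ (finite when $y<\infty$), the two difference sets have equal measure $\delta:=\mu(E\setminus I)=\mu(I\setminus E)$. On $E\setminus I$ we have $r\ge r_y$, so $w(r)\le w(r_y)$. On $I\setminus E$ we have $r<r_y$, so $w(r)\ge w(r_y)$. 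Hence the difference is at most $w(r_y)\delta-w(r_y)\delta=0$; when $r_y=L$, the set $E\setminus I$ is $\mu$-null and the bound is immediate. Finiteness of the integrals justifies the subtraction. The increasing case \eqref{eq:ray-rearrangement-increasing} follows by the same argument with the inequalities reversed. Alternatively, apply the decreasing case to $-w$, provided the finiteness assumption carries over.

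For the equality case, take $0<y<Y_*$, so $r_y\in(0,L)$, and let $w$ be strictly decreasing. Equality forces
\[
\int_{E\setminus I}\bigl(w(r_y)-w\bigr)\,d\mu=0
\qquad\text{and}\qquad
\int_{I\setminus E}\bigl(w-w(r_y)\bigr)\,d\mu=0,
\]
since both integrands are nonnegative and their sum equals the (zero) deficit. On $E\setminus I$, strict monotonicity gives $w(r_y)-w(r)>0$ except at the single point $r=r_y$, which is $\mu$-null. Therefore $\mu(E\setminus I)=0$, and then $\mu(I\setminus E)=\delta=0$ as well. So $E$ agrees with $(0,r_y)$ up to a $\mu$-null set.

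The only delicate point I expect is bookkeeping. First, the subtraction of possibly infinite integrals must be avoided, which the finiteness hypothesis handles. Second, the case $L=\infty$ with $Y_*=\infty$ must be covered; it poses no problem because $y<\infty$ then forces $r_y<\infty$.
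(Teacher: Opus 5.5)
Your proof is correct and follows the paper's exchange argument: you compare $w$ with the threshold value $w(r_y)$ on $E\setminus(0,r_y)$ and on $(0,r_y)\setminus E$, using that these two sets have equal $\mu$-measure. One small point: $E\subset[0,L)$ may contain $0$, so ``$r\ge r_y$ on $E\setminus I$'' (and the sign of $w(r_y)-w$ there) holds only up to the $\mu$-null set $\{0\}$.

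In the equality case you argue a little differently from the paper. You read off $\mu(E\setminus I)=0$ directly from the vanishing of the nonnegative integral $\int_{E\setminus I}(w(r_y)-w)\,d\mu$. The paper instead swaps equal-measure pieces $A\subset(0,r_y-\delta)$ and $B\subset(r_y+\delta,L)$, which requires extracting subsets of prescribed equal measure; your route avoids that step and is slightly cleaner.
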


% Then, for every continuous non-decreasing function \(w\) for which the integrals below are finite,
% \begin{equation}\label{eq:ray-rearrangement-increasing}
%         \int_E w(r)\,d\mu(r)
%         \geq
%         \int_0^{r_y} w(r)\,d\mu(r),
% \end{equation}
% and, for every continuous non-increasing function \(w\) for which the integrals below are finite,
% \begin{equation}\label{eq:ray-rearrangement-decreasing}
%         \int_E w(r)\,d\mu(r)
%         \leq
%         \int_0^{r_y} w(r)\,d\mu(r).
% \end{equation}
% If \(w\) is continuous and strictly decreasing and \(0<y<Y_*\), then equality in \eqref{eq:ray-rearrangement-decreasing} holds only when \(E=(0,r_y)\) up to a \(\mu\)-null set.

\begin{proof}
The cases \(y=0\) and \(y=Y_*\) are immediate. We therefore assume \(0<y<Y_*\), and write $I_y=(0,r_y)$. Since \(\mu(E)=\mu(I_y)=y\), the exchanged sets have the same \(\mu\)-measure:
\[
        \mu(E\setminus I_y)=\mu(I_y\setminus E).
\]

Suppose first that \(w\) is non-decreasing. Up to \(\mu\)-null sets, every point of \(E\setminus I_y\) lies to the right of \(r_y\), and every point of \(I_y\setminus E\) lies to the left of \(r_y\). Hence
\[
        w(r)\geq w(r_y)\quad\text{on }E\setminus I_y,
        \qquad
        w(r)\leq w(r_y)\quad\text{on }I_y\setminus E.
\]
Therefore
\[
\begin{aligned}
        \int_E w\,d\mu-\int_{I_y}w\,d\mu
        &=
        \int_{E\setminus I_y}w\,d\mu
        -
        \int_{I_y\setminus E}w\,d\mu \\
        &\geq
        w(r_y)\mu(E\setminus I_y)
        -
        w(r_y)\mu(I_y\setminus E)
        =0.
\end{aligned}
\]
This proves \eqref{eq:ray-rearrangement-increasing}. If \(w\) is non-increasing, the same argument with the inequalities reversed gives~\eqref{eq:ray-rearrangement-decreasing}.

It remains to discuss the equality case in the decreasing inequality. Assume that \(w\) is continuous and strictly decreasing and that equality holds in \eqref{eq:ray-rearrangement-decreasing}. If \(\mu(I_y\setminus E)>0\), then also \(\mu(E\setminus I_y)>0\). Since \(d\mu=\rho(r)\,dr\), the measure \(\mu\) is absolutely continuous with respect to Lebesgue measure and has no atoms. Hence there is some \(\delta>0\) such that
\[
        A=(I_y\setminus E)\cap (0,r_y-\delta),
        \qquad
        B=(E\setminus I_y)\cap (r_y+\delta,L)
\]
both have positive \(\mu\)-measure. Taking smaller subsets if necessary, we may assume that \(0<\mu(A)=\mu(B)<\infty\). Since \(w\) is strictly decreasing, \(w\) is strictly larger on \(A\) than on \(B\). Thus replacing the part \(B\) of \(E\) by the part \(A\) strictly increases \(\int_E w\,d\mu\), contradicting equality in the maximality inequality \eqref{eq:ray-rearrangement-decreasing}. Hence \(\mu(I_y\setminus E)=0\). Since the exchanged sets have the same \(\mu\)-measure, also \(\mu(E\setminus I_y)=0\). Therefore \(E=I_y=(0,r_y)\) up to a \(\mu\)-null set.
\end{proof}

\section{Proof of the main result}\label{sec:euclidean-proof}

We first choose the center from which the radial trial functions will be
transplanted. The following standard Weinberger-center lemma supplies the
center \(p\) needed to make the transplanted coordinate functions have
zero mean. It is a special case of Weinberger's orthogonality lemma; see
\cite[p.~635]{Wei56} and \cite[Corollary~2]{Lau20}. For completeness we
include the short energy-minimization proof.

\begin{lemma}\label{lem:weinberger-center}
Let \(\Omega\subset\R^m\) be a bounded domain, and let \(G:[0,\infty)\to[0,\infty)\) be continuous, bounded, and eventually equal to a positive constant. Set
\[
        \Phi(t)=\int_0^t G(s)\,ds .
\]
Then there exists a point \(p\in\R^m\) such that
\begin{equation}\label{eq:weinberger-center}
        \int_\Omega G(|x-p|)\frac{x-p}{|x-p|}\,dx=0,
\end{equation}
where the integrand is interpreted as \(0\) at \(x=p\).
\end{lemma}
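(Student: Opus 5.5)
The plan is the energy-minimization argument the paper announces. Define
\[
        E(q)=\int_\Omega \Phi(|x-q|)\,dx,\qquad q\in\R^m.
\]
Since \(G\) is continuous, bounded and nonnegative, \(\Phi\) is \(C^1\) on \([0,\infty)\), non-decreasing, with \(\Phi(0)=0\) and \(\Phi'=G\). The function \(x\mapsto\Phi(|x|)\) is \(C^1\) away from the origin. Its gradient there is \(G(|x|)x/|x|\), which is bounded by \(\sup G\). This also holds at the origin in the weak sense, since \(\Phi(|x|)\) is Lipschitz. Hence \(E\) is locally Lipschitz, in particular continuous.

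The first step is coercivity. Let \(G\equiv c>0\) on \([T,\infty)\). Then \(\Phi(t)=\Phi(T)+c(t-T)\) for \(t\ge T\), so \(\Phi(t)\to\infty\) linearly. Since \(\Omega\) is bounded, say \(\Omega\subset B_\rho(0)\), for \(|q|\) large every \(x\in\Omega\) satisfies \(|x-q|\ge|q|-\rho\). Thus \(E(q)\ge|\Omega|\,\Phi(|q|-\rho)\to\infty\). Therefore \(E\) attains a global minimum at some \(p\).

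The second step is to differentiate under the integral and set the gradient to zero. For fixed \(x\neq q\), the map \(q\mapsto\Phi(|x-q|)\) is differentiable with gradient \(-G(|x-q|)(x-q)/|x-q|\). This gradient is bounded uniformly by \(\sup G\), and the single exceptional point \(x=q\) is a null set. For a unit vector \(e\), dominated convergence applied to the difference quotients \((\Phi(|x-q-se|)-\Phi(|x-q|))/s\) gives
\[
        \nabla E(q)=-\int_\Omega G(|x-q|)\frac{x-q}{|x-q|}\,dx.
\]
These quotients are bounded by \(\sup G\) by the Lipschitz property and converge pointwise for \(x\neq q\). The integrand is continuous in \(q\) for a.e.\ \(x\) and bounded, so \(E\) is in fact \(C^1\). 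At the interior minimum \(p\) we get \(\nabla E(p)=0\), which is exactly \eqref{eq:weinberger-center}, with the convention at \(x=p\) being irrelevant since it is a single point.

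The only mildly delicate point is justifying differentiation despite the non-smoothness of \(|x-q|\) at \(x=q\). This is handled by the uniform Lipschitz bound and the fact that the singular set has measure zero. Note that positivity of the constant value of \(G\) is used only for coercivity.
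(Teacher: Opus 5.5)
Your proposal is correct and follows essentially the same route as the paper. You minimize $\int_\Omega \Phi(|x-q|)\,dx$, get coercivity from the eventually linear growth of $\Phi$ (via monotonicity and $\Phi(|q|-\rho)$, where the paper uses $\Phi(t)\ge \eta t-C_0$), and differentiate under the integral by dominated convergence with the uniform bound $\sup G$; your extra remark that $E$ is $C^1$ is true but not needed, since directional derivatives at the minimizer suffice.
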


\begin{proof}
For \(p\in\R^m\), define
\[
        \mathcal F(p)=\int_\Omega \Phi(|x-p|)\,dx.
\]
Since \(\Phi\) is \(\|G\|_\infty\)-Lipschitz, \(\mathcal F\) is continuous;
indeed,
\[
        |\mathcal F(p)-\mathcal F(q)|
        \le |\Omega|\,\|G\|_\infty |p-q|.
\]
We next show that \(\mathcal F(p)\to\infty\) as \(|p|\to\infty\). Fix
\(x_0\in\R^m\). Since \(\Omega\) is bounded, there is a constant
\(C_\Omega\) such that
\[
        |x-x_0|\le C_\Omega
        \qquad\text{for all }x\in\Omega.
\]
Since \(G\) is eventually equal to a positive constant, there are
constants \(\eta>0\) and \(C_0\) such that
\[
        \Phi(t)\ge \eta t-C_0
        \qquad\text{for all }t\ge0.
\]
Hence
\[
        \mathcal F(p)
        \ge
        \int_\Omega \bigl(\eta |x-p|-C_0\bigr)\,dx  
        \ge
        |\Omega|\bigl(\eta |p-x_0|-\eta C_\Omega-C_0\bigr).
\]
Thus \(\mathcal F(p)\to\infty\) as \(|p|\to\infty\). Since
\(\mathcal F\) is continuous, it attains a minimum at some point \(p\).

Let \(v\in\R^m\). For \(x\ne p\),
\[
        \left.\frac{d}{ds}\right|_{s=0}|x-(p+sv)|
        =
        -\left\langle v,\frac{x-p}{|x-p|}\right\rangle .
\]
For \(s\neq0\), we have
\[
\frac{\mathcal F(p+sv)-\mathcal F(p)}{s}
=
\int_\Omega
\frac{\Phi(|x-(p+sv)|)-\Phi(|x-p|)}{s}\,dx .
\]
By the preceding pointwise computation and the chain rule, for every
\(x\neq p\),
\[
\lim_{s\to0}
\frac{\Phi(|x-(p+sv)|)-\Phi(|x-p|)}{s}
=
-G(|x-p|)
\left\langle v,\frac{x-p}{|x-p|}\right\rangle .
\]
Moreover, since \(\Phi'=G\) is bounded and $\left||x-(p+sv)|-|x-p|\right|\le |s||v|$, the difference quotients
\[
        \frac{\Phi(|x-(p+sv)|)-\Phi(|x-p|)}{s}
\]
are dominated in absolute value by \(\|G\|_\infty |v|\), which is
integrable over \(\Omega\) since \(|\Omega|<\infty\). The point \(x=p\) has measure zero, so the pointwise computation above holds for almost every \(x\in\Omega\). The dominated convergence theorem gives
\[
\begin{aligned}
        \left.\frac{d}{ds}\right|_{s=0}\mathcal F(p+sv)
        &=
        -\int_\Omega
        G(|x-p|)
        \left\langle v,\frac{x-p}{|x-p|}\right\rangle dx  \\
        &=
        -\left\langle v,
        \int_\Omega G(|x-p|)\frac{x-p}{|x-p|}\,dx
        \right\rangle .
\end{aligned}
\]
Since \(p\) is a minimizer of \(\mathcal F\), the left-hand side is zero. Thus
\[
        \left\langle v,
        \int_\Omega G(|x-p|)\frac{x-p}{|x-p|}\,dx
        \right\rangle=0
        \qquad\text{for every }v\in\R^m.
\]
Since this holds for every \(v\in\R^m\), the vector inside the inner product must be zero. Hence \eqref{eq:weinberger-center} follows.
\end{proof}

We are now ready to present the following.

\begin{proof}[Proof of Theorem~\ref{thm:main-euclidean}]
We divide the proof into five steps. First we construct the Weinberger
trial functions and reduce the desired estimate to a lower bound for
\(\tr(K^{-1}M)\). Then we derive the raywise scalar estimates and convert
them into matrix inequalities for the mass and stiffness matrices. Finally,
we apply the matrix lemma and discuss the equality case.

\medskip
\noindent\textbf{Step 1. Trial functions and the Ritz reduction.}
Let \(B_R\subset\R^m\) be a ball satisfying \(|B_R|=|\Omega|\), let \(\lambda=\mu_1(B_R)\), and let \(G\) be the radial function from Section~\ref{sec:radial}. By Lemma~\ref{lem:euclidean-radial}, this function \(G\) is continuous, non-negative, bounded, and equal to the positive constant \(g(R)\) for \(r\ge R\). Hence Lemma~\ref{lem:weinberger-center} gives a point \(p\in\R^m\) such that
\begin{equation}\label{eq:euclidean-center-condition}
        \int_\Omega G(|x-p|)\frac{x-p}{|x-p|}\,dx=0,
\end{equation}
where the integrand is interpreted as \(0\) at \(x=p\). We write
\[
        B_R(p):=\{x\in\mathbb R^m:|x-p|<R\}.
\]
Write polar coordinates around \(p\) as $x=p+r\theta$, where $\theta\in\Sph^{m-1}$. Define, away from \(p\),
\[
        P_i(x)=G(r)\theta_i,
        \qquad i=1,\ldots,m.
\]
At the center we set \(P_i(p)=0\). This gives a continuous extension of the same formula at \(p\). More precisely, writing \(v=x-p=r\theta\),
\[
        P_i(p+v)=\frac{G(|v|)}{|v|}v_i,
\]
and the regularity of the degree-one radial eigenfunction
\[
        G(r)=g'(0)r+O(r^3)\qquad(r\downarrow0).
\]
For \(x\ne p\), the gradient satisfies
\[
|\nabla P_i|^2
=
G'(r)^2\theta_i^2+\frac{G(r)^2}{r^2}(1-\theta_i^2).
\]
The right-hand side is locally bounded near \(p\), because \(G'(r)\) and
\(G(r)/r\) are bounded there. It is also bounded away from \(p\) on
\(\Omega\): on compact subintervals
of \((0,R)\) this follows from the \(C^1\)-regularity of \(G\), while on
\(r\ge R\) we have \(G'(r)=0\) and \(G(r)^2/r^2\le G(R)^2/R^2\).
Since \(\Omega\) is bounded, it follows that \(P_i\in H^1(\Omega)\).

By taking the \(i\)-th component of~\eqref{eq:euclidean-center-condition}, we obtain
\begin{equation}\label{eq:euclidean-zero-mean}
        \int_\Omega P_i\,dx=0,
        \qquad i=1,\ldots,m.
\end{equation}
Let \(M\) and \(K\) be the mass and stiffness matrices
\[
        M_{ij}=\int_\Omega P_iP_j\,dx,
        \qquad
        K_{ij}=\int_\Omega \nabla P_i\cdot\nabla P_j\,dx.
\]
The construction above gives \(P_i\in H^1(\Omega)\) and, by \eqref{eq:euclidean-zero-mean}, each \(P_i\) has zero mean. We now verify the remaining non-degeneracy hypotheses needed in Lemma~\ref{lem:Ritz}. For \(\alpha=(\alpha_1,\ldots,\alpha_m)\in\mathbb R^m\), write $P_\alpha=\sum_{i=1}^m \alpha_i P_i$. If \(P_\alpha=0\) a.e.\ on \(\Omega\), then the continuity of \(P_\alpha\) implies that \(P_\alpha=0\) everywhere on \(\Omega\). For \(x\ne p\),
\[
        P_\alpha(x)
        =
        G(|x-p|)\frac{\alpha\cdot(x-p)}{|x-p|}.
\]
Since \(G(r)>0\) for \(r>0\), the zero set of \(P_\alpha\) away from \(p\) is contained in the hyperplane
\[
        \{x\in\mathbb R^m:\alpha\cdot(x-p)=0\}.
\]
If \(\alpha\ne0\), this hyperplane has empty interior, and the non-empty open set \(\Omega\) cannot be contained in it. Hence \(\alpha=0\). Thus \(P_1,\ldots,P_m\) are linearly independent in \(L^2(\Omega)\), and \(M\) is positive definite.

It remains to check that \(K\) is positive definite. If
\(\alpha^\top  K\alpha=0\), then
\[
        0=\int_\Omega |\nabla P_\alpha|^2\,dx.
\]
Hence \(P_\alpha\) has zero weak gradient and is constant on the connected
domain \(\Omega\). Since each \(P_i\) has zero mean by
\eqref{eq:euclidean-zero-mean}, we have
\[
        \int_\Omega P_\alpha\,dx=0.
\]
The constant value of \(P_\alpha\) is therefore zero. Thus
\(P_\alpha=0\) on \(\Omega\), and the linear independence just proved
implies \(\alpha=0\). Hence \(K\) is positive definite.

By Lemma~\ref{lem:Ritz},
\begin{equation}\label{eq:euclidean-Ritz}
        \sum_{k=1}^m\frac1{\mu_k(\Omega)}\geq \tr(K^{-1}M).
\end{equation}
It remains to prove that \(\tr(K^{-1}M)\geq m/\lambda\).

\medskip
\noindent\textbf{Step 2. Raywise scalar estimates.}
We first rewrite the mass and stiffness matrices in polar coordinates
centered at \(p\). Write \(x=p+r\theta\), with
\(r>0\) and \(\theta\in\Sph^{m-1}\). Here and below, \(d\sigma\) denotes the surface measure on \(\Sph^{m-1}\). Then
\begin{equation}\label{eq:euclidean-M}
        M=\int_\Omega G(r)^2\theta\theta^\top \,dx,
\end{equation}
while
\begin{equation}\label{eq:euclidean-K}
        K=\int_\Omega
        \left[
        G'(r)^2\theta\theta^\top 
        +\frac{G(r)^2}{r^2}(I-\theta\theta^\top )
        \right]dx.
\end{equation}
For each \(\theta\in\Sph^{m-1}\), let
\[
        E_\theta:=\{r>0:p+r\theta\in\Omega\}
\]
and define the radial volume on the ray by
\[
        Y(\theta)=\int_{E_\theta}r^{m-1}\,dr,
        \qquad
        Y_R=\int_0^Rr^{m-1}\,dr.
\]
Using polar coordinates centered at \(p\), and writing
\(\mathbf 1_\Omega\) for the indicator function of \(\Omega\), we have
\[
        |\Omega|
        =\int_{\R^m}\mathbf 1_\Omega(x)\,dx
        =\int_{\Sph^{m-1}}\int_{E_\theta}r^{m-1}\,dr\,d\sigma(\theta) 
        =\int_{\Sph^{m-1}}Y(\theta)\,d\sigma(\theta).
\]
Similarly,
\[
        |B_R|
        =
        \int_{\Sph^{m-1}}Y_R\,d\sigma(\theta).
\]
Since \(|B_R|=|\Omega|\), it follows that
\begin{equation}\label{eq:euclidean-Y-zero}
        \int_{\Sph^{m-1}}(Y(\theta)-Y_R)\,d\sigma(\theta)=0.
\end{equation}
Define
\[
        Z=\int_{\Sph^{m-1}}(Y(\theta)-Y_R)\theta\theta^\top \,d\sigma(\theta).
\]
Then \(\tr Z=0\).

Let
\[
        V(t)=\int_0^t s^{m-1}\,ds=\frac{t^m}{m},
        \qquad t\geq0,
\]
and denote its inverse by \(\tau\). Thus $V(\tau(y))=y$ and $\tau(y)=(my)^{1/m}$. Define
\[
\mathcal A(y)=\int_0^{\tau(y)}G(s)^2s^{m-1}\,ds,
\qquad
\mathcal H(y)=\int_0^{\tau(y)}\frac{G(s)^2}{s^2}s^{m-1}\,ds.
\]
For \(y>0\),
\[
\mathcal A'(y)=G(\tau(y))^2,
\qquad
\mathcal H'(y)=\frac{G(\tau(y))^2}{\tau(y)^2}.
\]
By Lemma~\ref{lem:euclidean-radial}, \(\mathcal A'\) is non-decreasing and
\(\mathcal H'\) is strictly decreasing. Hence \(\mathcal A\) is convex and
\(\mathcal H\) is strictly concave. Since \(Y_R=V(R)\) and \(\tau\) is the inverse of \(V\), we have \(\tau(Y_R)=R\). Thus
\begin{equation}\label{eq:euclidean-A-tangent}
\mathcal A(y)\geq \mathcal A(Y_R) +\mathcal A'(Y_R) (y-Y_R) = A_R+G(R)^2(y-Y_R),
\end{equation}
where \(A_R\) is defined in \eqref{eq:euclidean-AQH}, and
\begin{equation}\label{eq:euclidean-H-tangent}
\mathcal H(y)\leq \mathcal H(Y_R)+\mathcal H'(Y_R) (y-Y_R) = H_R+h_R(y-Y_R),
\qquad
h_R=\frac{G(R)^2}{R^2}.
\end{equation}
By Lemma~\ref{lem:raywise-rearrangement}, the initial interval of the same
\(r^{m-1}dr\)-measure minimizes the integral of the non-decreasing
function \(G^2\) and maximizes the integral of the strictly decreasing
function \(G^2/r^2\). Combining the raywise rearrangement inequalities with \eqref{eq:euclidean-A-tangent} and \eqref{eq:euclidean-H-tangent}, we obtain, for every \(\theta\in\Sph^{m-1}\),
\begin{equation}\label{eq:euclidean-ray-M}
        \int_{E_\theta}G(r)^2r^{m-1}\,dr
        \geq A_R+G(R)^2(Y(\theta)-Y_R),
\end{equation}
while
\begin{equation}\label{eq:euclidean-ray-H}
        \int_{E_\theta}\frac{G(r)^2}{r^2}r^{m-1}\,dr
        \leq H_R+h_R(Y(\theta)-Y_R).
\end{equation}
Also, since \(G'=0\) on \([R,\infty)\),
\begin{equation}\label{eq:euclidean-ray-Q}
        \int_{E_\theta}G'(r)^2r^{m-1}\,dr
        \leq Q_R,
\end{equation}
where \(Q_R\) is defined in \eqref{eq:euclidean-AQH}.

\medskip
\noindent\textbf{Step 3. Matrix comparison.} Using polar coordinates in \eqref{eq:euclidean-M}, we first write
\[
        M
        =
        \int_{\Sph^{m-1}}
        \left(
        \int_{E_\theta}G(r)^2r^{m-1}\,dr
        \right)
        \theta\theta^\top \,d\sigma(\theta).
\]
By \eqref{eq:euclidean-ray-M}, for every \(v\in\mathbb R^m\),
\[
\begin{aligned}
        v^\top  Mv
        &=
        \int_{\Sph^{m-1}}
        \left(
        \int_{E_\theta}G(r)^2r^{m-1}\,dr
        \right)
        (v\cdot\theta)^2\,d\sigma(\theta) \\
        &\geq
        A_R\int_{\Sph^{m-1}}(v\cdot\theta)^2\,d\sigma(\theta)
        +
        G(R)^2
        \int_{\Sph^{m-1}}(Y(\theta)-Y_R)(v\cdot\theta)^2\,d\sigma(\theta).
\end{aligned}
\]
Equivalently,
\[
        M\succeq
        A_R\int_{\Sph^{m-1}}\theta\theta^\top \,d\sigma(\theta)
        +
        G(R)^2
        \int_{\Sph^{m-1}}(Y(\theta)-Y_R)\theta\theta^\top \,d\sigma(\theta) .
\]
Since
\[
        \int_{\Sph^{m-1}}\theta\theta^\top \,d\sigma
        =
        \frac{|\Sph^{m-1}|}{m}I
\]
and the second integral is precisely \(Z\), we obtain
\begin{equation}\label{eq:euclidean-M-lower}
        M\succeq aI+cZ,
        \qquad
        a=\frac{|\Sph^{m-1}|}{m}A_R,
        \quad
        c=G(R)^2 .
\end{equation}

We next estimate \(K\) from above. From \eqref{eq:euclidean-K}, again in
polar coordinates,
\begin{equation}\label{eq:K_polar_coord}
K=
\int_{\Sph^{m-1}}
        \left[
        \left(\int_{E_\theta}G'(r)^2r^{m-1}\,dr\right)\theta\theta^\top 
        +
        \left(\int_{E_\theta}\frac{G(r)^2}{r^2}r^{m-1}\,dr\right)
        (I-\theta\theta^\top )
        \right]d\sigma(\theta).
\end{equation}
The raywise bounds \eqref{eq:euclidean-ray-Q} and
\eqref{eq:euclidean-ray-H} give
\[
K
\preceq
\int_{\Sph^{m-1}}
        \left[
        Q_R\theta\theta^\top 
        +
        \bigl(H_R+h_R(Y(\theta)-Y_R)\bigr)(I-\theta\theta^\top )
        \right]d\sigma(\theta).
\]
Here we use that \(\theta\theta^\top \) and \(I-\theta\theta^\top \) are
positive semidefinite: indeed,
\(v^\top \theta\theta^\top  v=(v\cdot\theta)^2\ge0\) and
\(v^\top (I-\theta\theta^\top )v=|v|^2-(v\cdot\theta)^2\ge0\).
Thus the scalar raywise bounds imply the stated Loewner-order bound for
the integrand, and hence for \(K\) after integration. Expanding the right-hand side gives
\[
\begin{aligned}
K
&\preceq
Q_R\int_{\Sph^{m-1}}\theta\theta^\top \,d\sigma
+
H_R\int_{\Sph^{m-1}}(I-\theta\theta^\top )\,d\sigma
+
h_R\int_{\Sph^{m-1}}(Y(\theta)-Y_R)(I-\theta\theta^\top )\,d\sigma \\
&=\frac{|\Sph^{m-1}|}{m}\bigl(Q_R+(m-1)H_R\bigr)I-h_RZ.
\end{aligned}
\]
By \eqref{eq:euclidean-energy}, this becomes
\begin{equation}\label{eq:euclidean-K-upper}
        K\preceq \lambda aI-dZ,
        \qquad
        d=h_R=\frac{G(R)^2}{R^2}>0 .
\end{equation}

\medskip
\noindent\textbf{Step 4. Conclusion of the inequality.}
We have already shown that \(K>0\), and \(M\succeq0\) because \(M\) is an
\(L^2\)-Gram matrix. Also, \(\tr Z=0\) by \eqref{eq:euclidean-Y-zero}, and
\(a,c,d\) are positive. Hence the hypotheses of
Lemma~\ref{lem:matrix-Jensen} are satisfied by
\eqref{eq:euclidean-M-lower} and \eqref{eq:euclidean-K-upper}. The lemma
therefore yields
\[
        \tr(K^{-1}M)\geq \frac{m}{\lambda}.
\]
Together with \eqref{eq:euclidean-Ritz}, this proves the asserted
inequality.

\medskip
\noindent\textbf{Step 5. Equality.}
If \(\Omega\) is a ball, then its first nonzero Neumann eigenvalue has
multiplicity \(m\), and equality is immediate. Conversely, suppose equality holds. Since the proof above gives
\[
        \sum_{k=1}^m\frac1{\mu_k(\Omega)}
        \geq \tr(K^{-1}M)\geq \frac{m}{\lambda},
\]
both inequalities are equalities. Hence the equality case in
Lemma~\ref{lem:matrix-Jensen} yields \(Z=0\), \(M=aI\), and
\(K=\lambda aI\). Thus equality holds in the upper bound
\eqref{eq:euclidean-K-upper} for \(K\).

For a.e. \(\theta\in\Sph^{m-1}\), write
\[
        \widetilde Q_\theta
        =
        \int_{E_\theta}G'(r)^2r^{m-1}\,dr,
        \qquad
        \widetilde H_\theta
        =
        \int_{E_\theta}\frac{G(r)^2}{r^2}r^{m-1}\,dr,
\]
and
\[
        L_\theta=H_R+h_R(Y(\theta)-Y_R).
\]
The raywise inequalities \eqref{eq:euclidean-ray-Q} and
\eqref{eq:euclidean-ray-H} say that
\[
        Q_R-\widetilde Q_\theta\geq0,
        \qquad
        L_\theta-\widetilde H_\theta\geq0
        \qquad\text{for a.e. }\theta.
\]
By~\eqref{eq:K_polar_coord}, the difference between the matrix upper bound for \(K\) in \eqref{eq:euclidean-K-upper} and \(K\) itself is
\[
        \int_{\Sph^{m-1}}
        \left[
        (Q_R-\widetilde Q_\theta)\theta\theta^\top 
        +
        (L_\theta-\widetilde H_\theta)(I-\theta\theta^\top )
        \right]d\sigma(\theta).
\]
This matrix is positive semidefinite, since
\(\theta\theta^\top \succeq0\), \(I-\theta\theta^\top \succeq0\), and the two
coefficients are non-negative. Since equality holds in the matrix upper
bound for \(K\) in \eqref{eq:euclidean-K-upper}, this difference matrix is zero. Taking traces and using the linearity of the trace and of the integral, we get
\[
        0=
        \int_{\Sph^{m-1}}
        \left[
        (Q_R-\widetilde Q_\theta)
        +(m-1)(L_\theta-\widetilde H_\theta)
        \right]
        \,d\sigma(\theta).
\]
The integrand is non-negative. Hence
\[
        (Q_R-\widetilde Q_\theta)
        +(m-1)(L_\theta-\widetilde H_\theta)=0
        \quad\text{for a.e. }\theta.
\]
Since the two summands are non-negative, they vanish separately. In
particular,
\begin{equation}\label{eq:euclidean-equality-H-ray}
        \int_{E_\theta}\frac{G(r)^2}{r^2}r^{m-1}\,dr
        =
        H_R+h_R(Y(\theta)-Y_R)
        \quad\text{for a.e. }\theta.
\end{equation}
For each such \(\theta\), the left side in \eqref{eq:euclidean-equality-H-ray} is at most \(\mathcal H(Y(\theta))\) by Lemma~\ref{lem:raywise-rearrangement}, while \eqref{eq:euclidean-H-tangent} gives
\[
        \mathcal H(Y(\theta))\leq H_R+h_R(Y(\theta)-Y_R).
\]
Hence equality holds in \eqref{eq:euclidean-H-tangent} with \(y=Y(\theta)\).
Since \(\mathcal H\) is strictly concave, equality in this tangent-line
inequality can occur only at the point of tangency. Therefore
\[
        Y(\theta)=Y_R=V(R)
        \quad\text{for a.e. }\theta.
\]
With this value fixed, equality in
Lemma~\ref{lem:raywise-rearrangement} for the strictly decreasing function \(G^2/r^2\) forces
\[
        E_\theta=(0,R)
        \quad\text{modulo }r^{m-1}dr\text{-null sets, for a.e. }\theta.
\]

Finally, the polar-coordinate formula gives
\[
\begin{aligned}
        |\Omega\triangle B_R(p)|
        &=
        \int_{\Sph^{m-1}}
        \int_0^\infty
        \mathbf 1_{\Omega\triangle B_R(p)}(p+r\theta)
        r^{m-1}\,dr\,d\sigma(\theta) \\
        &=
        \int_{\Sph^{m-1}}
        \int_{E_\theta\triangle(0,R)}
        r^{m-1}\,dr\,d\sigma(\theta)
        =0.
\end{aligned}
\]
Thus \(\Omega\) agrees with the ball \(B_R(p)\) up to a null set.

Since \(\Omega\) is open and \(|\Omega\setminus B_R(p)|=0\), we first show
that \(\Omega\subset B_R(p)\). Indeed, suppose that
\(x\in\Omega\setminus B_R(p)\). Since \(\Omega\) is open, there exists
\(\varepsilon>0\) such that \(B_\varepsilon(x)\subset\Omega\). If
\(|x-p|>R\), then, after decreasing \(\varepsilon\) if necessary, we have
\(B_\varepsilon(x)\subset \Omega\setminus B_R(p)\), which contradicts
\(|\Omega\setminus B_R(p)|=0\). If \(|x-p|=R\), then
\(B_\varepsilon(x)\cap (\mathbb R^m\setminus B_R(p))\) has positive
Lebesgue measure and is contained in \(\Omega\setminus B_R(p)\), again a
contradiction. Hence \(\Omega\subset B_R(p)\).

Conversely, suppose that \(x\in B_R(p)\setminus\Omega\). Since
\(|B_R(p)\setminus\Omega|=0\), the point \(x\) cannot lie outside
\(\overline\Omega\); otherwise a small ball around \(x\) would be
contained in \(B_R(p)\setminus\Omega\). Hence \(x\in\partial\Omega\).
Because \(x\) is an interior point of \(B_R(p)\) and \(\partial\Omega\) is
smooth, \(\Omega\) is locally on one side of a smooth hypersurface near
\(x\). Therefore \(B_R(p)\setminus\Omega\) has positive measure in a
sufficiently small neighborhood of \(x\), contradicting
\(|B_R(p)\setminus\Omega|=0\). Thus \(B_R(p)\subset\Omega\).

We conclude that \(\Omega=B_R(p)\). This proves the equality statement and completes the proof.
\end{proof}

\section*{Acknowledgements}

The authors acknowledge the use of AI tools during the exploratory stage
of this work. All statements and arguments were independently verified,
revised, and finalized by the authors, who take full responsibility for
the content of the paper.

\end{document}